\documentclass[12pt]{amsart}
\usepackage{amscd,verbatim}
\usepackage{amssymb}
\usepackage[colorlinks,linkcolor=blue,citecolor=blue,urlcolor=red]{hyperref}

\newcommand{\F}{\mathbf{F}}
\newcommand{\Q}{\mathbf{Q}}
\newcommand{\Z}{\mathbf{Z}}

\newcommand{\G}{\mathbb{G}}

\newcommand{\sA}{\mathcal{A}}
\newcommand{\sC}{\mathcal{C}}

\newcommand{\sH}{\mathcal{H}}
\newcommand{\sK}{\mathcal{K}}
\newcommand{\sM}{\mathcal{M}}

\renewcommand{\L}{\mathbb{L}}

\newcommand{\by}[1]{\overset{#1}{\longrightarrow}}
\newcommand{\iso}{\by{\sim}}

\renewcommand{\lim}{\varprojlim}
\newcommand{\colim}{\varinjlim}

\newcommand{\et}{{\operatorname{\acute{e}t}}}

\newcommand{\tr}{{\operatorname{tr}}}

\newcommand{\ind}{{\operatorname{ind}}}
\newcommand{\rat}{{\operatorname{rat}}}

\newcommand{\eff}{{\operatorname{eff}}}

\newcommand{\alg}{{\operatorname{alg}}}

\newcommand{\op}{{\operatorname{op}}}

\newcommand{\tors}{{\operatorname{tors}}}
\newcommand{\car}{\operatorname{char}}

\newcommand{\Pic}{\operatorname{Pic}}

\newcommand{\Tor}{\operatorname{Tor}}
\newcommand{\Hom}{\operatorname{Hom}}

\newcommand{\Alb}{\operatorname{Alb}}
\newcommand{\Ker}{\operatorname{Ker}}
\newcommand{\IM}{\operatorname{Im}}
\newcommand{\Coker}{\operatorname{Coker}}

\newcommand{\Ab}{\operatorname{\mathbf{Ab}}}
\renewcommand{\Vec}{\operatorname{\mathbf{Vec}}}
\newcommand{\Br}{\operatorname{Br}}
\newcommand{\NS}{\operatorname{NS}}

\newcounter{spec}
{\end{list}}

\newtheorem{thm}{Theorem}
\newtheorem{prop}{Proposition}
\newtheorem{lemme}{Lemma}
\newtheorem{cor}{Corollary}
\theoremstyle{definition}

\newtheorem{defn}{Definition}
\theoremstyle{remark}
\newtheorem{rque}{Remark}
\newtheorem{qn}{Question}

\begin{document}

\title{The Brauer group and  indecomposable $(2,1)$-cycles}
\author{Bruno Kahn}
\address{IMJ-PRG\\Case 247\\4, place Jussieu\\75252 Paris Cedex 05\\France}
\email{bruno.kahn@imj-prg.fr}
\date{July 9, 2015}
\begin{abstract}We show that the torsion in the group of indecomposable $(2,1)$-cycles on a smooth projective variety over an algebraically closed field is isomorphic to a twist of its Brauer group, away from the characteristic. In particular, this group is infinite as soon as $b_2-\rho>0$. We derive a new insight into Ro\v\i t\-man's theorem on torsion $0$-cycles over a surface.
\end{abstract}
\subjclass[2010]{19E15, 14F22}
\keywords{Motivic cohomology, Brauer group, Ro\v\i tman's theorem}
\maketitle

\section*{Introduction}

Let $X$ be a smooth projective variety over an algebraically
closed field $k$. The group
\[C(X)=H^1(X,\sK_2)\simeq CH^2(X,1)\simeq H^3(X,\Z(2))\]
has been widely studied. Its most interesting part is the \emph{indecomposable quotient}
\[H_\ind^1(X,\sK_2)\simeq CH_\ind^2(X,1)\simeq H_\ind^3(X,\Z(2))\]
defined as the cokernel of the natural homomorphism 
\begin{equation}\label{eqtheta}
\Pic(X)\otimes k^*\by{\theta} C(X).
\end{equation}

It vanishes for $\dim X\le 1$. 

Let $\Br(X)=H^2_\et(X,\G_m)$ be the Brauer group of $X$: it sits in an exact sequence
\begin{equation}\label{eq0}
0\to \NS(X)\otimes\Q/\Z\to H^2_\et(X,\Q/\Z(1))\to \Br(X)\to 0.
\end{equation}

Here we write $A(n)$ for $\colim_{(m,p)=1}\allowbreak {}_m A\otimes \mu_m^{\otimes n}$ for a prime-to-$p$ torsion abelian group $A$, and we set for $n\ge 0$, $i\in\Z$: 
\[H^i(X,\Q_p/\Z_p(n))=\colim_s H^{i-n}_\et(X,\nu_s(n))\]
where $p$ is the exponential characteristic of $k$ and, if $p>1$,  $\nu_s(n)$ is the $s$-th sheaf of logarithmic Hodge-Witt differentials of weight $n$ \cite{drW,milne,gros-suwa}. (See \cite[p. 629, (5.8.4)]{drW} for the $p$-primary part in characteristic $p$ in \eqref{eq0}.)

\begin{thm}\label{cr2}  
There are natural isomorphisms
\begin{align*}
\beta':\Br(X)\{p'\}(1)&\iso H_\ind^3(X,\Z(2))\{p'\}\\
\beta_p:H^2(X,\Q_p/\Z_p(2))&\iso H_\ind^3(X,\Z(2))\{p\}
\end{align*}
where $\{p\}$ (resp. $\{p'\}$) denotes $p$-primary torsion (resp. prime-to-$p$ torsion.) 
\end{thm}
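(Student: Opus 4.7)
The plan is to derive both isomorphisms from the universal coefficient (Bockstein) exact sequence for motivic cohomology, combined with the weight-two Beilinson--Lichtenbaum isomorphism for the prime-to-$p$ part and the Geisser--Levine theorem for the $p$-part.

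A uniform preliminary observation sets the stage. Since $k$ is algebraically closed, both $k^*$ and $\Pic^0(X)$ are divisible, hence $\Pic(X)\otimes k^*$ is divisible, and so is its image $D:=\IM(\theta)\subseteq H^3(X,\Z(2))$. Being injective in $\Ab$, $D$ splits off as a direct summand; consequently $H^3_\ind(X,\Z(2))_\tors\cong H^3(X,\Z(2))_\tors/D_\tors$. In characteristic $p>0$, $k^*$ is moreover uniquely $p$-divisible (Frobenius is injective), so $\Pic(X)\otimes k^*$ is a $\Z[1/p]$-module, $D\{p\}=0$, and hence $H^3_\ind(X,\Z(2))\{p\}=H^3(X,\Z(2))\{p\}$.

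For the prime-to-$p$ part, the Bockstein of $\Z(2)\by{m}\Z(2)\to\Z/m(2)$, in the colimit over $m$ coprime to $p$, yields
\[0\to H^2(X,\Z(2))\otimes\Q/\Z\to H^2(X,\Q/\Z(2))\to H^3(X,\Z(2))\{p'\}\to 0,\qquad(\ast)\]
whose middle term is canonically identified with $H^2_\et(X,\Q/\Z(2))$ by Merkurjev--Suslin; the Tate twist of \eqref{eq0} reads
\[0\to\NS(X)\otimes\Q/\Z(1)\to H^2_\et(X,\Q/\Z(2))\to\Br(X)\{p'\}(1)\to 0.\qquad(\ast\ast)\]
The prospective $\beta'$ is obtained by lifting through $(\ast\ast)$, identifying via Merkurjev--Suslin, projecting via $(\ast)$, and quotienting by $D\{p'\}$. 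Its well-definedness and bijectivity rest on the following key compatibility, which is the main obstacle: the image of $\NS(X)\otimes\Q/\Z(1)$ under the composite in $H^3(X,\Z(2))\{p'\}$ equals $D\{p'\}$, and the image of $H^2(X,\Z(2))\otimes\Q/\Z$ in $H^2_\et(X,\Q/\Z(2))$ lies inside $\NS(X)\otimes\Q/\Z(1)$. For the first half, since $\Pic^0(X)$ is $m$-divisible for $m$ coprime to $p$, one has $\Pic(X)\otimes\mu_m\cong\NS(X)\otimes\mu_m$, identifying the source with the torsion subgroup $\Pic(X)\otimes\mu_\infty^{(p')}\subseteq\Pic(X)\otimes k^*$; one must then verify that the composite matches the restriction of $\theta$, up to sign. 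This is the standard compatibility between cup products and the Bockstein---realized here by the Kummer extension $1\to\mu_m\to\G_m\to\G_m\to 1$---but verifying it rigorously calls for a chain-level or universal-model computation in $\DM$. Given these compatibilities, a diagram chase concludes.

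For the $p$-part, the analogue of $(\ast)$ reads
\[0\to H^2(X,\Z(2))\otimes\Q_p/\Z_p\to H^2(X,\Q_p/\Z_p(2))\to H^3(X,\Z(2))\{p\}\to 0,\]
so, in view of $H^3_\ind\{p\}=H^3\{p\}$, constructing $\beta_p$ reduces to showing $H^2(X,\Z(2))$ is $p$-divisible. The Geisser--Levine identification $\Z/p^s(n)\cong\nu_s(n)[-n]$ gives $H^1(X,\Z/p^s(2))=H^{-1}_\et(X,\nu_s(2))=0$, so the long exact sequence of $\Z(2)\by{p^s}\Z(2)\to\Z/p^s(2)$ yields $p$-torsion freeness; $p$-divisibility can be derived from the Nesterenko--Suslin--Totaro identification $H^2(X,\Z(2))\cong H^0(X,\sK_2^M)$ together with Suslin's theorem that $K_2^M$ of an algebraically closed field is uniquely divisible.
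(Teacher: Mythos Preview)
Your framework is the same as the paper's --- compare the Bockstein sequence $(\ast)$ with the twisted Brauer sequence $(\ast\ast)$ --- but you have skipped the two facts that carry all the arithmetic content, and in one place the argument you do give is wrong.

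\textbf{Prime-to-$p$ part.} Your ``first half'' asserts that the image of $(\Pic(X)\otimes k^*)\{p'\}$ under $\theta$ equals $D\{p'\}$. Writing $N=\Ker\theta$, the snake lemma applied to $0\to N\to\Pic(X)\otimes k^*\to D\to 0$ gives
\[
D\{l\}/\theta\bigl((\Pic(X)\otimes k^*)\{l\}\bigr)\cong N/l,
\]
so your claim is \emph{equivalent} to $N$ being $l$-divisible, which you never address. In the paper this is precisely Proposition~\ref{pr2}, and it is not formal: one descends to a finitely generated base field, shows (Lemma~\ref{lmanquant}, via hard Lefschetz and the Hodge index theorem) that $H^2_\ind(X,\Q_l(2))$ has no Galois invariants, and combines this with finiteness of $N/l$ to force $N^{\widehat{}}_l=0$. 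The cup-product/Bockstein compatibility you dwell on is the \emph{easy} direction; it only shows the image lies in $D\{p'\}$, not that it fills it. Likewise your ``second half'' --- that $H^2(X,\Z(2))\otimes\Q/\Z$ lands in $\NS(X)\otimes\Q/\Z(1)$ --- is asserted with no argument. The paper instead proves the stronger fact $H^2(X,\Z(2))\otimes\Q_l/\Z_l=0$ (Proposition~\ref{p1}), again by a weight argument using Deligne and Gabber. Without at least one of these inputs your $\beta'$ is well-defined and surjective, but you have no control over its kernel.

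\textbf{$p$-part.} You correctly reduce to the $p$-divisibility of $H^2(X,\Z(2))$, but the argument offered does not work. Suslin's theorem concerns $K_2^M(k)$ for $k$ algebraically closed; it says nothing about $H^0(X,\sK_2^M)$, which sits inside $K_2^M(k(X))$, and for a field $F$ of characteristic $p$ one has $K_2^M(F)/p\cong\Omega^2_{F,\log}$, typically nonzero. The paper's argument (last sentence of Proposition~\ref{p1}) again uses weights together with Illusie's torsion-freeness of $H^n_\et(X,\Z_p(n))$.
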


Theorem \ref{cr2} gives an interpretation of the Brauer group (away from $p$)\footnote{The group $H^2(X,\Q_p/\Z_p(2))$ is very different from $\Br(X)\{p\}$: suppose that $k$ is the algebraic closure of a finite field $\F_q$ over which $X$ is defined. In \cite[Rk 5.6]{milne}, Milne proves
\[\det(1-\gamma t\mid H^i(X,\Q_p(n)) = \prod_{v(a_{ij})=v(q^n)} (1-(q^n/a_{ij})t) \]
where $\gamma$ is the ``arithmetic'' Frobenius of $X$ over $\F_q$ and the $a_{ij}$ are the eigenvalues of the ``geometric'' Frobenius  acting on the crystalline cohomology $H^i(X/W)\otimes \Q_p$ (or, equivalently, on $l$-adic cohomology for $l\ne p$ by Katz-Messing). We get $V_p(\Br(X)\{p\})$ for $i=2,n=1$ and $V_p(H^2(X,\Q_p/\Z_p(2)))$ for $i=2,n=2$.
} in terms of algebraic cycles. In view of \eqref{eq0}, it also implies:

\begin{cor}\label{c1} If $b_2-\rho>0$, $H_\ind^3(X,\Z(2))$ is infinite. In
  characteristic zero, if $p_g>0$ then $ H_\ind^3(X,\Z(2))$ is infinite.\qed
\end{cor}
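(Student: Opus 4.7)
The plan is to use Theorem \ref{cr2} to reduce the question to showing that $\Br(X)$ has a sufficiently large prime-to-$p$ part, and then extract the needed corank bound from \eqref{eq0}.

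Fix a prime $\ell\ne p$. Taking the $\ell$-primary component of \eqref{eq0} gives a short exact sequence
\[0\to \NS(X)\otimes\Q_\ell/\Z_\ell\to H^2_\et(X,\Q_\ell/\Z_\ell(1))\to \Br(X)\{\ell\}\to 0.\]
The leftmost term is a cofinitely generated $\Z_\ell$-module of corank $\rho$, while the middle term has corank $b_2$: this follows from the divisible-quotient sequence $0\to \Z_\ell(1)\to \Q_\ell(1)\to \Q_\ell/\Z_\ell(1)\to 0$ applied to the finitely generated $\Z_\ell$-module $H^2_\et(X,\Z_\ell(1))$ of rank $b_2$. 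Hence $\Br(X)\{\ell\}$ has corank $b_2-\rho>0$ and is in particular infinite. Theorem \ref{cr2} then supplies an isomorphism $\Br(X)\{\ell\}(1)\iso H^3_\ind(X,\Z(2))\{\ell\}$; since $k$ is algebraically closed, choosing a compatible system of $\ell^n$-th roots of unity trivialises the Tate twist as an abstract abelian group, so $H^3_\ind(X,\Z(2))\{\ell\}$ is infinite, whence so is $H^3_\ind(X,\Z(2))$.

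For the characteristic-zero statement, by the Lefschetz principle we may assume $k=\mathbb{C}$. Hodge theory gives $b_2=2p_g+h^{1,1}$ and the Lefschetz $(1,1)$ theorem gives $\rho\le h^{1,1}$, so $p_g>0$ forces $b_2-\rho\ge 2p_g>0$ and the first case applies.

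No substantial obstacle is visible; the argument is essentially bookkeeping once Theorem \ref{cr2} is in hand. The only point deserving a moment's care is the observation that the Tate-twist decoration in Theorem \ref{cr2} is cosmetic over an algebraically closed field for the purpose of counting elements, so an infinite $\Br(X)\{\ell\}$ really does translate into an infinite $H^3_\ind(X,\Z(2))$.
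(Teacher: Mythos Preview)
Your argument is correct and is precisely the intended one: the paper merely writes \verb|\qed| because the statement follows at once from Theorem~\ref{cr2} together with \eqref{eq0}, and you have spelled out exactly this computation (corank $b_2-\rho$ of $\Br(X)\{\ell\}$ from \eqref{eq0}, then the Hodge-theoretic inequality $b_2-\rho\ge 2p_g$ for the characteristic-zero addendum).
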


To my knowledge, this is the first general result on indecomposable (2,1)-cycles.
It relates to the following open question:

\begin{qn}[See also Remark \protect{\ref{r1}}]\label{q1} Is there a surface $X$ such that $b_2-\rho>0$ but $H_\ind^3(X,\Z(2))\otimes \Q\allowbreak= 0$?
\end{qn}

Many examples of complex surfaces $X$ for which $H_\ind^3(X,\Z(2))$ is not torsion have been given,  see e.g. \cite{cdkl} and the references therein. In most of them, one shows that a version of the Beilinson regulator with values in a quotient of Deligne cohomology takes non torsion values on this group. On the other hand, there are examples of complex surfaces $X$ with $p_g>0$ for which the regulator vanishes rationally \cite[Th. 1.6]{voisin}, but there seems to be no such $X$ for which one can decide whether $H_\ind^3(X,\Z(2))\otimes \Q\allowbreak= 0$.

Question \ref{q1} evokes Mumford's nonrepresentability theorem for the Albanese kernel $T(X)$ in the Chow group $CH_0(X)$ under the given hypothesis. It is of course  much harder, but not unrelated. The link comes through the \emph{transcendental part of the Chow motive of $X$}, introduced and studied in \cite{kmp}. If we denote this motive by $t_2(X)$ as in loc. cit., we have
\[T(X)_\Q= \Hom_\Q(t_2(X),\L^2) = H^4(t_2(X),\Z(2))_\Q\]
\cite[Prop. 7.2.3]{kmp}. Here, all groups are taken in the category $\Ab\otimes \Q$ of abelian groups modulo groups of finite exponent and $\Hom_\Q$ denotes the refined $\Hom$ group on the category $\sM_\rat^\eff(k,\Q)$ of effective Chow motives with $\Q$ coefficients (see Section \ref{loc} for all this), while $\L$ is the Lefschetz motive; to justify the last term, note that Chow correspondences act on motivic cohomology, so that motivic cohomology of a Chow motive makes sense. We show:

\begin{thm}[see Proposition \protect{\ref{pr3}}]\label{t2} If $X$ is a surface, we have an isomorphism in $\Ab\otimes \Q$:
\[H_\ind^3(X,\Z(2))_\Q\simeq H^3(t_2(X),\Z(2))_\Q.\]
\end{thm}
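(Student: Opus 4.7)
The plan is to use the Chow--K\"unneth decomposition of $X$ to identify $H^3_\ind(X,\Z(2))_\Q$ with the direct summand of $C(X)_\Q$ cut out by the idempotent defining $t_2(X)$.

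First, I would invoke the Chow--K\"unneth decomposition $h(X)=\bigoplus_{i=0}^4 h_i(X)$, available for surfaces by Murre's theorem, together with the refinement $h_2(X)=t_2(X)\oplus h_2^{\alg}(X)$ from \cite{kmp}, where $h_2^{\alg}(X)\cong \L^{\oplus\rho}$ ($\rho$ the Picard number). Applying $H^3(-,\Z(2))_\Q$ yields a direct sum decomposition of $C(X)_\Q$ in $\Ab\otimes\Q$. The extreme pieces vanish, since motivic cohomology of $\Spec k$ vanishes strictly above the weight; and $H^3(h_2^{\alg}(X),\Z(2))_\Q\simeq (\NS(X)\otimes k^*)_\Q$ via $H^3(\L,\Z(2))=H^1(\Spec k,\Z(1))=k^*$.

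Next, since $\Pic(X)_\Q=H^2(X,\Z(1))_\Q$ is concentrated on the summands $h_1$ and $h_2^{\alg}$ (the other $H^2(h_i,\Z(1))_\Q$ vanish for weight reasons), and cup product respects Chow--K\"unneth, the map $\theta$ factors through $H^3(h_1(X)\oplus h_2^{\alg}(X),\Z(2))_\Q$. In particular $\pi_{2,t}\circ\theta=0$ in $\Ab\otimes\Q$, where $\pi_{2,t}$ denotes the idempotent of $t_2(X)$, so $\pi_{2,t}$ descends to a natural map $\phi:H_\ind^3(X,\Z(2))_\Q\to H^3(t_2(X),\Z(2))_\Q$. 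The candidate inverse $\psi$ is induced by the inclusion $H^3(t_2(X),\Z(2))_\Q\hookrightarrow C(X)_\Q$ followed by projection to $H_\ind^3$, and $\phi\circ\psi=\mathrm{id}$ by construction. To check $\psi\circ\phi=\mathrm{id}$, I would need $(1-\pi_{2,t})\alpha\in\IM(\theta)_\Q$ modulo groups of finite exponent for every $\alpha\in C(X)_\Q$; after the vanishing of the $h_0,h_4$ pieces and the tautological inclusion of $\pi_{2,\alg}\alpha$ into $\IM(\theta)$ (which accounts for $\NS(X)\otimes k^*$), this reduces to the containment $H^3(h_1(X),\Z(2))_\Q\oplus H^3(h_3(X),\Z(2))_\Q\subseteq \IM(\theta)_\Q$ modulo finite exponent.

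I expect this last inclusion to be the main obstacle. The strategy would be to choose a smooth projective curve $C\subset X$ so that the surjection $\Pic^0(C)\twoheadrightarrow\Alb(X)$ (available whenever $C$ is general enough) induces a motivic surjection $h_1(C)\twoheadrightarrow h_1(X)$, and dually, via Poincar\'e duality on $X$, a surjection onto $h_3(X)$. This reduces the claim to the case of curves, where $H_\ind^3(C,\Z(2))$ is rationally trivial by classical results of Bloch and Suslin on $K_2$ of curves over algebraically closed fields; compatibility of $\theta$ under the correspondences between $C$ and $X$ then completes the proof.
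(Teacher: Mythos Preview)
Your proposal is essentially correct and follows the same route as the paper: apply the refined Chow--K\"unneth decomposition \eqref{eq1} to $H^3(X,\Z(2))_\Q$, use that $\pi_2^{\tr}$ annihilates $\Pic(X)_\Q$ so that $\theta$ lands in the complement of $H^3(t_2(X),\Z(2))_\Q$, and then check that the remaining summands are absorbed by $\IM\theta$. The paper packages this as Proposition~\ref{pr3} (row $i=3$) and argues ``by exclusion'', but the substance is the same as what you write.

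There is one point where your outline diverges from the paper and is slightly imprecise. For $h_3(X)$ you propose to obtain a surjection from a curve via Poincar\'e duality and then invoke $H^3_\ind(C,\Z(2))=0$. What Poincar\'e duality actually gives is $h_3(X)\simeq h_1(X)(1)$, hence $h_3(X)$ is a summand of $h_1(C)(1)$, a \emph{Tate-twisted} curve motive, not of $h(C)$ itself. Consequently the relevant input is not the vanishing of $H^3_\ind(C,\Z(2))$ but rather
\[
H^3(h_3(X),\Z(2))_\Q \hookrightarrow H^3(h_1(C)(1),\Z(2))_\Q = H^1(h_1(C),\Z(1))_\Q = 0,
\]
the last equality because $H^1(C,\Z(1))_\Q=k^*_\Q$ is entirely accounted for by $h_0(C)$. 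This is exactly how the paper handles $h_3$: it shows $H^3(h_3(X),\Z(2))_\Q=0$ outright rather than pushing it into $\IM\theta$. Your reduction-to-curves idea does work cleanly for $h_1$ (where $H^3_\ind(C,\Z(2))=0$ is indeed the point), and this is precisely the content hidden behind the paper's ``by exclusion'' for the entry $H^3(h_1(X),\Z(2))_\Q=\Pic^0(X)k^*$; you have made that step more explicit than the paper does.
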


\begin{cor}[\protect{\cite[Prop. 2.15]{ctr}}]\label{c2} In Theorem \ref{t2}, assume that $k$ has infinite transcendence degree over its prime subfield. If $T(X)=0$, then $H_\ind^3(X,\Z(2))$ is finite.
\end{cor}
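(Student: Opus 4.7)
The plan is to combine Theorem~\ref{t2} with a Bloch--Srinivas decomposition of the diagonal, and then use Theorem~\ref{cr2} to upgrade the resulting finite-exponent statement to a finiteness statement. Since Theorem~\ref{t2} gives
$H^3_\ind(X,\Z(2))_\Q \simeq H^3(t_2(X),\Z(2))_\Q$ in $\Ab\otimes\Q$, finiteness will follow from two assertions: (a) $H^3_\ind(X,\Z(2))$ has finite exponent, and (b) $H^3_\ind(X,\Z(2))$ is cofinitely generated as a torsion abelian group. These combine formally: a cofinitely generated torsion abelian group of exponent $N$ has trivial $\ell$-primary part for $\ell\nmid N$ and collapses to its finite cotorsion at the remaining primes.

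For (a), since $T(X)=0$ and $k$ is a universal domain, Bloch--Srinivas produces an integer $N\ge 1$, a divisor $D\subset X$ and a curve $C\subset X$ with
\[
N\Delta_X = Z_1+Z_2\quad\text{in}\quad CH^2(X\times X),
\]
where $Z_1$ is supported on $D\times X$ and $Z_2$ on $X\times C$. The goal is to show that both correspondence actions $Z_i^\ast:H^3(X,\Z(2))\to H^3(X,\Z(2))$ land in $\IM(\theta)$. For $Z_2$, this is immediate from the projection formula: the action factors through the pushforward of $H^1(\tilde C,\Z(1))=k^\ast$ along a smooth model $\tilde C\to X$ of $C$, which by Gysin is $[C]\cdot k^\ast\subset\Pic(X)\cdot k^\ast\subset\IM(\theta)$. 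For $Z_1$, the action factors through pullback along a smooth model $\tilde D\to X$ of $D$, landing in $H^3(\tilde D,\Z(2))$; since $\tilde D$ is a curve, this group equals $\IM(\theta_{\tilde D})=\Pic(\tilde D)\cdot k^\ast$ by the vanishing statement recalled after~\eqref{eqtheta}. Tracking a decomposable class $\theta_{\tilde D}(L\otimes u)$ through pullback, multiplication by the lift of $Z_1$ in $\Pic(\tilde D\times X)$, and pushforward, repeated use of the projection formula delivers a class of the form $\theta_X(L'\otimes u)$ with $L'\in\Pic(X)$. Hence $N\cdot H^3(X,\Z(2))\subset\IM(\theta)$, so $N$ kills $H^3_\ind(X,\Z(2))$.

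For (b), Theorem~\ref{cr2} identifies $H^3_\ind(X,\Z(2))$ with $\Br(X)\{p'\}(1)\oplus H^2(X,\Q_p/\Z_p(2))$. For a smooth projective surface each $\ell$-primary summand has the classical shape $(\Q_\ell/\Z_\ell)^{b_2-\rho}\oplus(\text{finite})$ via~\eqref{eq0} (and its $p$-primary analogue via logarithmic Hodge--Witt cohomology), so~(b) holds. The delicate point in the argument is (a), specifically the verification that pushforward from a curve sends decomposable classes to decomposable classes; this relies on the vanishing $H^3_\ind(\tilde D,\Z(2))=0$ for smooth projective curves. Everything else is essentially formal given Theorems~\ref{cr2} and~\ref{t2}.
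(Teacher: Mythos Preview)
Your argument is correct, but the route differs from the paper's. The paper's proof is a three-liner: over a universal domain, $T(X)=0$ is equivalent to $t_2(X)=0$ by \cite[Cor.~7.4.9~b)]{kmp}; hence $H^3(t_2(X),\Z(2))_\Q=0$, and Theorem~\ref{t2} gives $H^3_\ind(X,\Z(2))_\Q=0$, i.e.\ finite exponent; finiteness then follows from Theorem~\ref{cr2} and the structure of $\Br(X)$. So the paper really \emph{uses} Theorem~\ref{t2} to obtain finite exponent, by first killing the motive $t_2(X)$.

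You instead prove finite exponent directly via a Bloch--Srinivas decomposition of the diagonal, and only then appeal to Theorem~\ref{cr2}. This is perfectly valid and is in fact closer to the original argument in \cite{ctr} that the corollary cites; it also makes Theorem~\ref{t2} unnecessary here. Two small points: (i) your opening sentence invokes Theorem~\ref{t2} but you never actually use it---drop that clause, or replace it with the observation that the result $t_2(X)=0$ from \cite{kmp} (which encapsulates Bloch--Srinivas) would give finite exponent immediately via Theorem~\ref{t2}; (ii) you have swapped the supports: with $Z_2$ supported on $X\times C$ the action of $Z_2^*$ factors through the \emph{pullback} $j^*\colon H^3(X,\Z(2))\to H^3(\tilde C,\Z(2))$, while $Z_1$ on $D\times X$ factors through the \emph{Gysin pushforward} $j_*\colon H^1(\tilde D,\Z(1))\to H^3(X,\Z(2))$. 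Since both $C$ and $D$ are curves on a surface, your two computations still cover both cases and the conclusion is unaffected, but the labels should be exchanged.
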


\begin{proof}  Under the hypothesis on $k$, $T(X)=0$ $\iff$ $t_2(X)=0$ \cite[Cor. 7.4.9 b)]{kmp}. Thus, $T(X)=0$ $\Rightarrow H_\ind^3(X,\Z(2))_\Q=0$ by Theorem \ref{t2}. This means that $H_\ind^3(X,\Z(2))$ has finite exponent, hence is finite by Theorem \ref{cr2} and the known structure of $\Br(X)$.
\end{proof}

\begin{rque}\label{r1}1)  For $l\ne p$, $H_\ind^3(X,\Z(2))\{l\}$ finite $\iff$ $b_2-\rho=0$ by Theorem \ref{cr2}.  Under Bloch's conjecture, this implies $t_2(X)=0$ \cite[Cor. 7.6.11]{kmp}, hence $T(X)=0$ and (by Theorem \ref{t2}) $H_\ind^3(X,\Z(2))$ finite. This provides conjectural converses to Corollaries \ref{c1} (for a surface) and \ref{c2}.\\ 
2) The quotient of $H_\ind^3(X,\Z(2))_\tors$ by its maximal divisible subgroup is dual to $\NS(X)_\tors$, at least away from $p$: we leave this to the interested reader.
\end{rque}

In Section \ref{sgen}, we apply Theorem \ref{t2} to give   a proof of Ro\v\i t\-man's theorem that $T(X)$ is uniquely divisible, up to a group of finite exponent. This proof is related to Bloch's \cite{bloch}, but avoids Lefschetz pencils; we feel that $t_2(X)$ gives a new understanding of the situation.

\subsection*{Acknowledgements} This work was done during a visit in the Tata Institute of Fundamental research (Mumbai) in the fall 2006: I would like to thank R. Sujatha for her invitation, TIFR for its hospitality and support and IFIM for travel support. I also thank James Lewis, Joseph Oesterl\'e and Masanori Asakura for helpful remarks. Finally, I thank the referee for insisting on more details in the proof of Proposition \ref{pr2}, which helped to uncover a gap now filled by Lemma \ref{lmanquant}.

\section{Proof of Theorem \ref{cr2}}\label{s1}

This proof is an elaboration of the arguments of Colliot-Th\'el\`ene and Raskind in \cite{ctr}, completed by Gros-Suwa \cite[Ch. IV]{gros-suwa} for $l=\car k$. We use motivic cohomology as it smoothens the exposition and is more inspirational, but stress that these ideas go back to  \cite{bloch,panin,ctr} and \cite{gros-suwa}. We refer to \cite[\S 2]{cycle-etale} for an exposition of ordinary and \'etale motivic cohomology and the facts used below, especially to \cite[Th. 2.6]{cycle-etale} for the comparison with \'etale cohomology of twisted roots of unity and logarithmic Hodge-Witt sheaves.

Multiplication by $l^s$ on \'etale motivic cohomology yields ``Bockstein'' exact sequences
\[0\to H^i_\et(X,\Z(n))/l^s\to H^i_\et(X,\Z/l^s(n))\to
{}_{l^s}H^{i+1}_\et(X,\Z(n))\to 0\]
for any prime $l$, $s\ge 1$, $n\ge 0$ and $i\in\Z$.
Since $\lim^1 H^i_\et(X,\Z(n))/l^s=0$, one gets in the 
limit exact sequences:
\begin{equation}\label{eq3}
0\to H^i_\et(X,\Z(n))^{\widehat{}}\by{a} H^{i}_\et(X,\hat{\Z}(n))\by{b}
\hat{T}(H^{i+1}_\et(X,\Z(n)))\to 0
\end{equation}
where $\hat{T}(-)= \Hom(\Q/\Z,-)$
denotes the total Tate module. This first yields:

\begin{prop}\label{p1}
For $i\ne 2n$, $\IM a\otimes \Z[1/p]$ is finite in \eqref{eq3}$\otimes \Z[1/p]$ and $H^i_\et(X,\Z(n))\allowbreak\otimes \Z[1/p]$ is an extension of a finite group
by a divisible group. If $p>1$, $H^i_\et(X,\Z(n))\allowbreak\otimes \Z_{(p)}$ is an extension of a group of finite exponent by a divisible group, and is divisible if $i=n$. In particular, $H^n_\et(X,\Z(n))$ is an extension of a finite group of order prime to $p$ by a divisible group.
\end{prop}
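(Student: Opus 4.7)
\emph{Proof plan.} The plan is to analyze \eqref{eq3} prime by prime.

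Fix $\ell\ne p$. Since $X$ is smooth projective over an algebraically closed field and $\ell$ is invertible, $H^i_\et(X,\mu_{\ell^s}^{\otimes n})$ is finite for every $s$, whence $H^i_\et(X,\Z_\ell(n))$ is a finitely generated $\Z_\ell$-module and $H^i_\et(X,\Z(n))^{\widehat{}}_\ell$ embeds into it via the $\ell$-part of \eqref{eq3}. To prove this embedding is torsion when $i\ne 2n$, I spread $X$ to a smooth proper model $\mathcal X\to\Spec R$ over a finitely generated $\Z[1/\ell]$-subalgebra $R$ of $k$ and pick a closed point $s\in\Spec R$ with residue field $\F_q$. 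Smooth proper base change gives $H^i_\et(X,\Q_\ell(n))\iso H^i_\et(\mathcal X_{\bar s},\Q_\ell(n))$ compatibly with Galois actions, and by Deligne the geometric Frobenius acts with eigenvalues of absolute value $q^{(i-2n)/2}\ne 1$, so its invariants vanish. Since every class of $H^i_\et(X,\Z(n))$ descends to a finite stage and is thus Galois-invariant, its image in $H^i_\et(X,\Q_\ell(n))$ is zero; hence $H^i_\et(X,\Z(n))^{\widehat{}}_\ell$ is a torsion submodule of a finitely generated $\Z_\ell$-module, so finite. Gabber's torsion-freeness of $H^*_\et(X,\Z_\ell)$ for almost all $\ell$ makes this completion vanish for almost all $\ell$, and $\IM a\otimes\Z[1/p]$ is globally finite.

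Setting $B:=H^i_\et(X,\Z(n))\otimes\Z[1/p]$, I deduce the structural statement as follows. Finiteness of each $B^{\widehat{}}_\ell=\lim_s B/\ell^s B$ forces the decreasing chain $\{\ell^s B\}$ to stabilize at some $s=c_\ell$, so $D_\ell:=\ell^{c_\ell}B$ is $\ell$-divisible. Let $S$ denote the finite set of primes $\ell\ne p$ with $c_\ell>0$ and set $N:=\prod_{\ell\in S}\ell^{c_\ell}$. A short prime-by-prime check (using $\ell$-divisibility of $B$ for $\ell\notin S\cup\{p\}$ and the above stability for $\ell\in S$) shows that $NB$ is divisible while $B/NB\cong\bigoplus_{\ell\in S} B^{\widehat{}}_\ell$ is finite, so $B$ is an extension of a finite group by a divisible one.

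The same blueprint applies at $\ell=p$ with the logarithmic Hodge-Witt sheaves $\nu_s(n)$ replacing $\mu_{\ell^s}^{\otimes n}$: finiteness of $H^j_\et(X,\nu_s(n))$ and the crystalline analog of the weight bound for $i\ne 2n$ are supplied by Gros-Suwa \cite[Ch.~IV]{gros-suwa} and by the Katz-Messing comparison used in \cite{milne}. One gets ``finite exponent plus divisible'' rather than ``finite plus divisible'' since no Gabber-type control on the set of primes is available at $p$. The divisibility at $i=n$ reflects the unique divisibility of the Milnor $K$-theory of an algebraically closed field, transferred to $X$ via the Bloch-Kato-Gabber identification $K^M_n/p^s\iso\nu_s(n)$. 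The main obstacle is the passage from completion-finite to group-finite-plus-divisible: the weights step is standard, but the structural deduction requires the stabilization argument and careful bookkeeping across primes; the divisibility at $i=n$, $\ell=p$ is additionally delicate as it rests on the Hodge-Witt/motivic comparison in top degree.
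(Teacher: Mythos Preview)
Your prime-to-$p$ argument is essentially the paper's: a weight argument (you set it up via specialization to a closed point with finite residue field, the paper via the action of $Gal(k/k_0)$ for $k_0$ finitely generated --- both are standard and equivalent), Gabber's theorem for almost all $\ell$, and then a structural deduction. Your ``stabilization'' step is exactly the content of the paper's Lemma~\ref{ctr}.

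At $p$ there are genuine gaps. First, the groups $H^j_\et(X,\nu_s(n))$ are \emph{not} finite in general: the sheaves $\nu_s(n)$ are not constructible, and by Illusie--Raynaud \cite[Th.~3.3~b)]{il-ra} the limit $H^i_\et(X,\Z_p(n))$ is an extension of a pro-\'etale group by a unipotent quasi-algebraic $k$-group, whose $k$-points can form an infinite abelian group. What survives is that its torsion has \emph{finite exponent}; combined with the weight input $H^i_\et(X,\Q_p(n))^U=0$ for $i\ne 2n$ from Gros--Suwa \cite[II.2.3]{gros-suwa}, this bounds the exponent of $\IM a_p$. Your explanation that the weakening to ``finite exponent'' comes from lacking ``Gabber-type control on the set of primes'' is off target: only the single prime $p$ is in play, and the issue is the unipotent part of one $\Z_p$-cohomology group, not a product over primes.

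Second, your argument for $p$-divisibility at $i=n$ does not go through. Bloch--Kato--Gabber gives $K^M_n(K)/p^s\simeq\nu_s(n)(K)$ for a \emph{field} $K$ of characteristic $p$; applied to the algebraically closed ground field $k$ this yields $K^M_n(k)/p^s=0$, but that says nothing about $H^0_\et(X,\nu_s(n))$ for positive-dimensional $X$: the function field $k(X)$ is not algebraically closed and, for instance, $k(X)^*$ is not $p$-divisible. The paper instead uses a genuinely de Rham--Witt input: $H^n_\et(X,\Z_p(n))=\lim_s H^0_\et(X,\nu_s(n))$ is \emph{torsion-free} by Illusie \cite[Ch.~II, Cor.~2.17]{drW}, so the weight argument (image of $a_p$ lies in torsion for $n\ne 0$) forces $\IM a_p=0$, whence $p$-divisibility of $H^n_\et(X,\Z(n))$.
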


\begin{proof} This is the argument of \cite[1.8 and 2.2]{ctr}. Let us summarise it: $H^i_\et(X,\Z(n))$ is ``of weight $0$'' and $H^{i}_\et(X,\hat{\Z}(n))$ is ``of weight $i-2n$'' by Deligne's proof of the Weil conjectures. It follows that $a\otimes \Z[1/p]$ has finite image in every $l$-component, hence has finite image by  Gabber's theorem \cite{gabber}. One derives the structure of $H^i_\et(X,\Z(n))\otimes \Z[1/p]$ from this. 

On the referee's request, we add more details. Since $X$ is defined over a finitely generated field, motivic cohomology commutes with filtering inverse limits of smooth schemes (with affine transition morphisms) and $l$-adic cohomology is invariant under algebraically closed extensions, to show that $a$ has finite image we may assume that $k$ is the algebraic closure of a finitely generated field $k_0$ over which $X$ is defined. If $i\ne 2n$ and $l\ne p$, then $H^{i}_\et(X,\Z_l(n))^U$ is finite for any open subgroup $U$ of $Gal(k/k_0)$ \cite[1.5]{ctr}, while $H^i_\et(X,\Z(n))=\bigcup_U H^i_\et(X,\Z(n))^U$. Thus the image $I(l)$ of the composition $H^i_\et(X,\Z(n))\to H^i_\et(X,\Z(n))^{\widehat{}}_l\by{a_l} H^{i}_\et(X,\Z_l(n))$ is contained in the (finite) torsion subgroup of \break $H^{i}_\et(X,\Z_l(n))$, hence this composition factors through $H^i_\et(X,\Z(n))/l^s$ for $s\gg 0$, implying that $\IM a_l=I(l)$ is finite, and $0$ for almost all $l$ by \cite{gabber}. The conclusion now follows by Lemma \ref{ctr} below. 

If $l=p$, the group $H^{i}_\et(X,\Q_p(n))^U$ is still $0$ for $i\ne 2n$ by  \cite[II.2.3]{gros-suwa}. The group $H^{i}_\et(X,\Z_p(n))$  has the structure of an extension of a pro-\'etale group by a unipotent quasi-algebraic group by \cite[Th. 3.3 b)]{il-ra}, hence has finite exponent independent of $k$. Therefore $H^{i}_\et(X,\Z_p(n))^U$ has bounded exponent when $U$ varies, hence (as above) $\IM a_p$ has finite exponent, and the first claim. For the second one, $H^{n}_\et(X,\Z_p(n))$ is always torsion-free by \cite[Ch. II, Cor. 2.17]{drW}.
\end{proof}

\begin{lemme}\label{ctr} Let $A$ be an abelian group such that $\hat{A}=\lim A/m$ has finite exponent. Then $A$ is an extension of $\hat{A}$ by a divisible group.
\end{lemme}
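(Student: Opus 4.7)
The plan is to make the extension completely explicit: fix any positive integer $N$ with $N\hat A=0$, and show that the kernel of the canonical map $A\to\hat A$ is precisely $NA$, which is moreover divisible. In other words, I will argue that $\hat A$ is just $A/NA$ under this finite-exponent hypothesis, and that killing $N$ from $A$ kills all finite obstructions to divisibility.

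The first step is to observe that for every $m\ge 1$, the quotient $A/mA$ is a quotient of $\hat A$: the surjection $A\twoheadrightarrow A/mA$ factors through the canonical map $A\to\hat A$ followed by the structural projection $\hat A\to A/mA$, which is therefore surjective too. Since $N$ annihilates $\hat A$, it annihilates each $A/mA$, i.e.\ $NA\subseteq mA$ for every $m$. Applied to $m$ replaced by $Nm$, this gives $NA\subseteq Nm A$, from which divisibility of $NA$ follows in one line: any $Na\in NA$ can be written $Na=Nmb=m(Nb)\in m(NA)$.

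Next I would identify $\hat A$ with $A/NA$ via the projection $\hat A\to A/NA$. Surjectivity is automatic from the factorization above. For injectivity, take $(x_m)\in\hat A$ with $x_N=0$; for each $m$ the compatibility of the system forces $x_{Nm}\in\ker(A/NmA\to A/NA)=NA/NmA$, and the previous step shows $NA/NmA=0$, so $x_{Nm}=0$ and hence $x_m=0$ by the transition $A/NmA\to A/mA$. Combining with the divisibility of $NA$, the tautological exact sequence $0\to NA\to A\to A/NA\to 0$ becomes the desired extension $0\to NA\to A\to\hat A\to 0$.

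There is no serious obstacle here: the only slightly nontrivial point is that the finite-exponent hypothesis on $\hat A$ forces the inverse system $\{A/mA\}_m$ to stabilize, in the precise sense that $\hat A\cong A/NA$. Once that is established, divisibility of $NA$ is immediate, and the lemma follows.
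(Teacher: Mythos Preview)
Your proof is correct and follows essentially the same route as the paper's own argument: both establish that the inverse system stabilizes, i.e.\ $\hat A\cong A/NA$ for $N$ the exponent of $\hat A$, and then observe that the kernel $NA=\bigcap_m mA$ is divisible. Your version is more explicit in justifying the isomorphism $\hat A\cong A/NA$ (the paper simply asserts it), but the content is the same.
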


\begin{proof} This is the argument of \cite[Th. 1.8]{ctr}, that we reproduce here. First, $\hat{A}\iso A/m_0$ for some $m_0\ge 1$, hence $A\to \hat{A}$ is surjective. Now $A/m\iso A/m_0$ for any multiple $m$ of $m_0$, hence $\Ker(A\to \hat{A})=mA$ for any such $m$: thus this kernel is divisible as claimed.
\end{proof}

\begin{rque} In characteristic $p$, the torsion subgroup of $H^{i}_\et(X,\Z_p(n))$ may well be infinite for $i>n$ (compare \cite[Ch. II, \S 7]{drW}), and then so is the quotient of $H^i_\et(X,\Z(n))\otimes \Z_{(p)}$ by its maximal divisible subgroup.
\end{rque}


Consider now the case $n=2$. Recall that $H^i(X,\Z(2))\iso H^i_\et(X,\Z(2))$ for $i\le 3$ from the Merkurjev-Suslin theorem (cf. \cite[(2-6)]{cycle-etale}). 

For $l\ne p$,  let
\begin{align*}
H^2_\ind(X,\mu_{l^n}^{\otimes 2})& = \Coker(\Pic(X)\otimes \mu_{l^n}\to
H^2_\et(X,\mu_{l^n}^{\otimes 2}))\\
H^2_\ind(X,\Z_l(2))& = \Coker(\Pic(X)\otimes \Z_l(1)\to
H^2_\et(X,\Z_l(2))).
\end{align*}

\begin{lemme}\label{l1} For $l\ne p$, there is a canonical isomorphism $H^2_\ind(X,\Z_l(2))\allowbreak\simeq T_l(\Br(X))(1)$. 
In particular, this group is torsion-free.
\end{lemme}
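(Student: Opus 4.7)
The plan is to compute $H^2_\ind(X,\mu_{l^n}^{\otimes 2})$ at each finite level via the Kummer sequence and then pass to the inverse limit. Starting from the Kummer short exact sequence on $X$, one obtains
\[0 \to \Pic(X)/l^n \to H^2_\et(X,\mu_{l^n}) \to {}_{l^n}\Br(X) \to 0.\]
Since all three terms are killed by $l^n$, tensoring over $\Z$ with $\mu_{l^n}$ agrees with tensoring over $\Z/l^n$ with a locally free rank-one module and is therefore exact; identifying $H^2_\et(X,\mu_{l^n})\otimes\mu_{l^n}$ with $H^2_\et(X,\mu_{l^n}^{\otimes 2})$ (using that $\mu_{l^n}$ is constant with $H^0(X,\mu_{l^n})=\mu_{l^n}$), I get
\[0 \to \Pic(X)/l^n(1) \to H^2_\et(X,\mu_{l^n}^{\otimes 2}) \to {}_{l^n}\Br(X)(1) \to 0.\]

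Next I would identify the image of the Chern class map $\Pic(X)\otimes\mu_{l^n}\to H^2_\et(X,\mu_{l^n}^{\otimes 2})$. Because the target is $l^n$-torsion, the map factors through $\Pic(X)/l^n\otimes\mu_{l^n}$, and the factored map is injective by the sequence above. This yields the clean finite-level identification $H^2_\ind(X,\mu_{l^n}^{\otimes 2})\simeq {}_{l^n}\Br(X)(1)$. Note that $\Pic^0(X)$ is $l$-divisible for $l\ne p$ (since $k$ is algebraically closed), so $\Pic(X)/l^n=\NS(X)/l^n$, which is finite.

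Now pass to the inverse limit over $n$. The transition maps $\NS(X)/l^{n+1}(1)\to \NS(X)/l^n(1)$ are surjective with finite quotients because $\NS(X)$ is finitely generated, so Mittag-Leffler applies and $\varprojlim{}^1$ vanishes on the left. This produces the exact sequence
\[0 \to \NS(X)\otimes\Z_l(1) \to H^2_\et(X,\Z_l(2)) \to T_l(\Br(X))(1) \to 0.\]
It then remains to check that the image of $\Pic(X)\otimes\Z_l(1)\to H^2_\et(X,\Z_l(2))$ is exactly the submodule $\NS(X)\otimes\Z_l(1)$: containment is automatic since at each finite level the image dies in ${}_{l^n}\Br(X)(1)$, while surjectivity onto $\NS(X)\otimes\Z_l(1)$ follows from Nakayama's lemma applied to this finitely generated $\Z_l$-module (modulo $l$ the image is all of $\NS(X)/l(1)$ by the finite-level identification). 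Dividing out then yields $H^2_\ind(X,\Z_l(2))\simeq T_l(\Br(X))(1)$, which is torsion-free as a Tate module tensored with the torsion-free module $\Z_l(1)$.

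The main obstacle is the limit step: both establishing exactness of the limit sequence (which requires the Mittag-Leffler property, hence finite generation of $\NS(X)$) and upgrading the \emph{dense} image of $\Pic(X)\otimes\Z_l(1)$ in $\NS(X)\otimes\Z_l(1)$ to an \emph{equality} via Nakayama. The finite-level computation is essentially the argument of Colliot-Th\'el\`ene--Raskind recalled earlier.
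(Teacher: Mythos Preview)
Your argument is correct and follows exactly the route the paper indicates: its proof reads in full ``Straightforward from the Kummer exact sequence'', and you have spelled out precisely those details. One minor simplification: the Nakayama step is unnecessary, since $\Pic(X)\twoheadrightarrow\NS(X)$ already gives surjectivity of $\Pic(X)\otimes\Z_l(1)\to\NS(X)\otimes\Z_l(1)$ directly.
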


\begin{proof} Straightforward from the Kummer exact sequence.
\end{proof}

We have a  commutative diagram 
\begin{equation}\label{eq4}
\begin{CD}
0&\longrightarrow& \Pic(X)\otimes \mu_{l^s}&\longrightarrow&H^2_\et(X,\mu_{l^s}^{\otimes 2})&\longrightarrow& H^2_\ind(X,\mu_{l^s}^{\otimes 2})&\longrightarrow& 0\\
&&@V{\text{surjective}}VV @V\alpha_s VV\\
0&\longrightarrow& {}_{l^s}(\Pic(X)\otimes k^*)&\longrightarrow& {}_{l^s}H^3(X,\Z(2))&\longrightarrow& {}_{l^s}
  H_\ind^3(X,\Z(2)) &\longrightarrow& 0
\end{CD}
\end{equation}
where the upper row is exact and the lower row is a
complex. This diagram is equivalent to the one in  \cite[2.8]{ctr}, but the proof of its commutativity is easier, as a consequence of the compatibility of Bockstein boundaries with cup-product in hypercohomology. This yields maps
\begin{align}
H^2_\ind(X,\mu_{l^s}^{\otimes 2})&\by{\beta_s} {}_{l^s}
  H_\ind^3(X,\Z(2)),\label{1}
\end{align}
an inverse limit commutative  diagram
\begin{equation}\label{eq5}
\begin{CD}
0&\to& \NS(X)\otimes \Z_l(1)&\longrightarrow& H^2_\et(X,\Z_l(2))&\overset{\pi}{\longrightarrow}& H^2_\ind(X,\Z_l(2))&\to& 0\\
&&@V{\text{surjective}}VV @V\hat{\alpha}VV @V\hat{\beta} VV\\
0 &\to& T_l(\Pic(X)\otimes k^*)&\longrightarrow& T_l(H^3(X,\Z(2))&\longrightarrow& T_l(  H_\ind^3(X,\Z(2)) &\to& 0
\end{CD}
\end{equation}
(note that $\Pic(X)\otimes \mu_{l^s}\allowbreak\iso \NS(X)\otimes \mu_{l^s}$) and a direct limit commmutative  diagram
\begin{equation}\label{eq5ind}
\begin{CD}
0&\to&\Pic(X)\otimes \mu_{l^\infty}&\longrightarrow& H^2(X,\Q_l/\Z_l(2))&\longrightarrow& \Br(X)\{l\}(1)&\to& 0\\
&&@V{\wr}VV @V{\alpha_l}VV @V{\beta_l}VV\\
0&\to&(\Pic(X)\otimes k^*)\{l\}&\longrightarrow& H^3(X,\Z(2))\{l\}&\longrightarrow& H_\ind^3(X,\Z(2))\{l\}&\to& 0
\end{CD}
\end{equation}
where $\beta_l$ defines the map $\beta'$ in Theorem \ref{cr2}. Note that the left vertical map in \eqref{eq5ind} is injective because $\Tor(\Pic(X),k^*\otimes \Z[1/l])\{l\}=0$.

\begin{lemme}\label{lmanquant} If $X$ is defined over a subfield $k_0$ with algebraic closure $k$, the map $\pi$ of \eqref{eq5} has a $G$-equivariant section after $\otimes \Q$, where $G=Gal(k/k_0)$. In particular, if $k_0$ is  finitely generated, then $H^2_\ind(X,\Q_l(2))^U\allowbreak =0$ for any open subgroup $U$ of $G$. 
\end{lemme}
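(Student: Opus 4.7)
The plan is to exhibit the section via orthogonality under a Lefschetz pairing on $H^2_\et$. Twisting \eqref{eq5} by $\Q_l(-1)$ and invoking Lemma~\ref{l1}, it is enough to produce a $G$-equivariant section of
\[0 \to \NS(X) \otimes \Q_l \to H^2_\et(X,\Q_l(1)) \to T_l(\Br(X)) \otimes \Q \to 0;\]
the case $d := \dim X \le 1$ being trivial, assume $d \ge 2$. Since $\NS(X)$ is finitely generated, $G$ acts on it through a finite quotient, so averaging the $G$-orbit of any ample class yields a $G$-invariant ample class $\bar h \in \NS(X) \otimes \Q$. Using $\bar h^{d-2}$ and the trace, I form the $G$-equivariant symmetric pairing
\[\langle x,y \rangle := \tr_X(\bar h^{d-2} \cup x \cup y), \quad x,y \in H^2_\et(X,\Q_l(1)),\]
with $\tr_X \colon H^{2d}_\et(X,\Q_l(d)) \iso \Q_l$ the trace map.

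By Deligne's hard Lefschetz theorem (Weil~II), cup-product with $\bar h^{d-2}$ is an isomorphism $H^2_\et(X,\Q_l(1)) \iso H^{2d-2}_\et(X,\Q_l(d-1))$, so combined with Poincar\'e duality $\langle-,-\rangle$ is non-degenerate on $H^2_\et(X,\Q_l(1))$. Its restriction to $\NS(X) \otimes \Q_l$ coincides with the classical intersection pairing $(D,D') \mapsto \deg(D\cdot D' \cdot \bar h^{d-2})$ on $\NS(X) \otimes \Q$, which is non-degenerate by the Hodge index theorem (equivalently, Grothendieck's standard conjecture of Hodge type for divisors, known in all characteristics via reduction to a smooth surface). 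Hence
\[H^2_\et(X,\Q_l(1)) = (\NS(X) \otimes \Q_l) \oplus (\NS(X) \otimes \Q_l)^\perp\]
as $G$-modules, and the orthogonal complement, retwisted by $\Q_l(1)$, supplies the required section of $\pi\otimes\Q$. The main obstacle will be this step: non-degeneracy of $\langle-,-\rangle$ on the subspace $\NS(X)\otimes\Q_l$ does not follow formally from non-degeneracy on the ambient $H^2_\et$, and genuinely requires the positivity available for divisor classes.

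For the \emph{in particular} statement, fix an open $U \subset G$, corresponding to a finite (hence finitely generated) extension $k_0' = k^U$ of $k_0$. By Deligne's proof of the Weil conjectures, $H^2_\et(X,\Q_l(2))$ is pure of weight $-2$ as a $U$-representation: Frobenius at any good closed point of a model of $X$ over a finitely generated subring of $k_0'$ acts with eigenvalues of absolute value $q^{-1} \ne 1$ and therefore has no invariant vectors, so neither does $U$. Since the section constructed above realises $H^2_\ind(X,\Q_l(2))$ as a $G$-equivariant direct summand of $H^2_\et(X,\Q_l(2))$, the vanishing $H^2_\ind(X,\Q_l(2))^U = 0$ follows.
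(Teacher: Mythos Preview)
Your proof is correct and follows essentially the same route as the paper: split off $\NS(X)\otimes\Q_l$ inside $H^2_\et(X,\Q_l(1))$ via the Lefschetz pairing, using hard Lefschetz and Poincar\'e duality for non-degeneracy on the ambient space and the Hodge index theorem for divisors for non-degeneracy on the subspace; then deduce the vanishing of $U$-invariants from the weight argument already used in Proposition~\ref{p1}. The only cosmetic differences are that the paper obtains its $G$-invariant ample class directly as the class of a smooth hyperplane section defined over $k_0$ (rather than by averaging a $G$-orbit), and separates the case $d=2$ (where Poincar\'e duality alone suffices) from $d>2$.
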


\begin{proof} Let $d=\dim X$: we may assume $d>1$. If $d=2$, the perfect Poincar\'e pairing $H^2_\et(X,\Q_l(1))\times H^2_\et(X,\Q_l(1))\to \Q_l$ restricts to the perfect intersection pairing $ \NS(X)\otimes {\Q_l}\otimes  \NS(X)\otimes {\Q_l}\to \Q_l$; the promised section is then given by the orthogonal complement of $ \NS(X)\otimes \Q_l(1)$ in $H^2_\et(X,\Q_l(2))$. If $d >2$, let $L\in H^2(X,\Q_l)$ be the class of a smooth hyperplane section defined over $k_0$. The hard Lefschetz theorem and Poincar\'e duality provide a perfect pairing on $H^2_\et(X,\Q_l(1))$:
\[(x,y)\mapsto x\cdot L^{d-2}\cdot y\]
which restricts to a similar pairing on $\NS(X)\otimes {\Q_l}$. The Hodge index theorem for divisors \cite[Prop. 7.4 p. 665]{kleiman} implies that the latter pairing is also nondegenerate, so we get the desired section in the same way. The last claim now follows from the vanishing of $H^2(X,\Q_l(2))^U$, see proof of Proposition \ref{p1}.
\end{proof}

We shall use the following fact, which is proven in   \cite[2.7]{ctr} (and could be reproven here with motivic cohomology in the same fashion):

\begin{lemme} \label{l0} In \eqref{eqtheta}, $N:=\Ker\theta$ has no $l$-torsion.
\end{lemme}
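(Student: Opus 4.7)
Following the strategy of \cite[Lem.~2.7]{ctr}, I aim to show that any $\xi \in \Pic(X) \otimes k^*$ with $l^s\xi = 0$ and $\theta(\xi) = 0$ must vanish, for a prime $l \neq p$.

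The first step is to set up the long Tor exact sequence obtained by tensoring $0 \to \mu_{l^s} \to k^* \xrightarrow{l^s} k^* \to 0$ with $\Pic(X)$. It yields a short exact sequence
\[0 \to K \to \Pic(X) \otimes \mu_{l^s} \twoheadrightarrow {}_{l^s}\bigl(\Pic(X) \otimes k^*\bigr) \to 0,\]
whose surjection is the left vertical arrow of \eqref{eq4}. One checks that the $l$-primary part of $K$ is canonically $\NS(X)\{l\}/l^s$: the contribution coming from $\Pic^0(X)[l^\infty] \cong (\Q_l/\Z_l)^{2g}$ disappears because that group is $l$-divisible. I then lift $\xi$ to some $\tilde\xi \in \Pic(X) \otimes \mu_{l^s}$ and let $y \in H^2_\et(X, \mu_{l^s}^{\otimes 2})$ be its image under the top-left arrow of \eqref{eq4}; by exactness of the top row, this arrow is injective, so $\tilde\xi$ is recovered from $y$.

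The commutativity of \eqref{eq4} then yields $\alpha_s(y) = $ (image of $\xi$ in ${}_{l^s}H^3(X, \Z(2))$), which vanishes because $\theta(\xi) = 0$. By the Bockstein exact sequence, $y$ lies in the image of $H^2_\et(X, \Z(2))/l^s \to H^2_\et(X, \mu_{l^s}^{\otimes 2})$. To conclude $\xi = 0$ it suffices to show that $\tilde\xi$ lies in $K$, equivalently (using injectivity of the top-left map of \eqref{eq4}) that
\[\IM\bigl(\Pic(X) \otimes \mu_{l^s}\bigr) \cap \Ker \alpha_s \;\subseteq\; \IM\bigl(K \to H^2_\et(X, \mu_{l^s}^{\otimes 2})\bigr).\]

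This transversality inclusion is the main obstacle. I would attack it by combining the Merkurjev--Suslin theorem (identifying $H^2_\et(X, \mu_{l^s}^{\otimes 2})$ with Milnor $K_2$-theoretic data modulo $l^s$) with Suslin's fact that $K_2(k)$ is uniquely divisible when $k$ is algebraically closed. Together these should force the image of $H^2_\et(X, \Z(2))/l^s = H^0(X, \sK_2)/l^s$ to meet $\IM(\Pic(X) \otimes \mu_{l^s})$ only in the Chern-class image of the finite group $\NS(X)\{l\}/l^s = K$. Once this is verified, the injectivity of the top row of \eqref{eq4} forces $\tilde\xi \in K$, and hence $\xi = 0$, as required.
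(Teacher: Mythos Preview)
Your setup is correct: the Tor sequence, the identification $K\cong \NS(X)\{l\}/l^s$, and the diagram chase showing that $\xi=0$ is equivalent to $\tilde\xi\in K$, i.e.\ to the ``transversality'' inclusion
\[
\IM\bigl(\Pic(X)\otimes\mu_{l^s}\bigr)\cap \Ker\alpha_s \;\subseteq\; \IM(K).
\]
The problem is that this inclusion is not a reduction at all --- it is \emph{equivalent} to the lemma. Indeed, by the commutativity of \eqref{eq4} and the injectivity of its top-left arrow, the quotient $\IM(\Pic(X)\otimes\mu_{l^s})/\IM(K)$ identifies with ${}_{l^s}(\Pic(X)\otimes k^*)$, and its map to $H^2_\et(X,\mu_{l^s}^{\otimes 2})/\Ker\alpha_s\hookrightarrow {}_{l^s}H^3(X,\Z(2))$ is exactly ${}_{l^s}\theta$. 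So your transversality statement says precisely that ${}_{l^s}\theta$ is injective, i.e.\ ${}_{l^s}N=0$. You have reformulated the lemma, not reduced it.

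Your proposed attack on the reformulation (``Merkurjev--Suslin plus unique divisibility of $K_2(k)$ should force the image of $H^0(X,\sK_2)/l^s$ to meet $\IM(\Pic(X)\otimes\mu_{l^s})$ only in $\IM(K)$'') is where all the content lies, and you give no argument. Note that $\IM(\Pic(X)\otimes\mu_{l^s})=\IM(\NS(X)\otimes\mu_{l^s})$ sits inside $H^2_\et(X,\mu_{l^s}^{\otimes 2})$ in the same weight as the rest of that group, so a pure weight argument will not separate it from $\IM(H^0(X,\sK_2)/l^s)$; something more specific is needed.

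The paper itself does not prove this lemma: it simply invokes \cite[2.7]{ctr}. The Colliot-Th\'el\`ene--Raskind argument does not pass through your transversality formulation; it works more directly with the Gersten complex for $\sK_2$ and the tame symbol, using Merkurjev--Suslin at the level of function fields together with the unique $l$-divisibility of $K_2(k)$ to control the kernel explicitly. If you want to complete your approach, you will need an independent handle on $H^0(X,\sK_2)/l^s$ (for instance via Proposition~\ref{p1}) strong enough to pin down its image in $H^2_\et(X,\mu_{l^s}^{\otimes 2})$ --- but at that point you are essentially redoing the CTR argument in different packaging.
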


\begin{prop}[cf. \protect{\cite[Rk. 2.13]{ctr}}]\label{pr2} $\beta_s$ is surjective in \eqref{1} and $\hat{\beta}$ is bijective in   \eqref{eq5}; $N$ is uniquely divisible; the lower row of \eqref{eq5ind} is exact and $\beta_l$ is bijective.
\end{prop}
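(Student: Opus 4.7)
The plan is to diagram-chase \eqref{eq4} and then pass to inverse and direct limits. I will establish unique divisibility of $N$ early, because it is needed to promote the lower row of \eqref{eq4} from a complex to a genuine short exact sequence.

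Step 1 (surjectivity of $\beta_s$ and unique divisibility of $N$). The middle vertical $\alpha_s$ in \eqref{eq4} is the Bockstein boundary, so it is surjective with $\Ker \alpha_s = H^2_\et(X,\Z(2))/l^s$. The right map ${}_{l^s}H^3(X,\Z(2)) \to {}_{l^s}H_\ind^3(X,\Z(2))$ is surjective because $\IM\theta = \Pic(X)\otimes k^*/N$ is $l^s$-divisible as a quotient of the divisible group $\Pic(X)\otimes k^*$; chasing the diagram yields $\beta_s$ surjective. Unique divisibility of $N$ is then immediate: Lemma \ref{l0} gives torsion-freeness, and $\IM\theta$ is divisible hence injective in $\Ab$, so $0 \to N \to \Pic(X)\otimes k^* \to \IM\theta \to 0$ splits, exhibiting $N$ as a direct summand of the divisible $\Pic(X)\otimes k^*$.

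Step 2 (bijectivity of $\hat\beta$, the crux). With $N$ divisible, ${}_{l^s}(\Pic(X)\otimes k^*) \iso {}_{l^s}(\IM\theta)$ and the lower row of \eqref{eq4} is short exact. Its inverse limit produces the lower SES of \eqref{eq5} (Mittag-Leffler on the finite terms, combined with $l$-divisibility of $\Pic(X)\otimes k^*$ to kill $\Ext^1(\Q_l/\Z_l,\Pic(X)\otimes k^*)$). The Bockstein exact sequence \eqref{eq3} identifies $\Ker\hat\alpha$ with $H^2_\et(X,\Z(2))^\wedge_l$, a finite group when $l\ne p$ by Proposition \ref{p1}; snake then gives $\hat\beta$ surjective. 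For injectivity: given $x \in H^2_\ind(X,\Z_l(2))$ with $\hat\beta(x)=0$, pick $\tilde x \in H^2_\et(X,\Z_l(2))$ with $\pi(\tilde x) = x$; since the left vertical is surjective, there is $\tilde z \in \NS(X)\otimes \Z_l(1)$ with $\hat\alpha(\tilde x) = (\mathrm{left})(\tilde z)$, so $w := \tilde x - \tilde z \in \Ker\hat\alpha$ is torsion in $H^2_\et(X,\Z_l(2))$. By Lemma \ref{l1}, $H^2_\ind(X,\Z_l(2)) \simeq T_l(\Br(X))(1)$ is torsion-free; therefore $H^2_\et(X,\Z_l(2))_\tors \subseteq \Ker\pi = \NS(X)\otimes \Z_l(1)$, whence $x = \pi(w) = 0$. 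Controlling the Bockstein kernel via the torsion-freeness of $H^2_\ind$ is the main obstacle of the proof.

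Step 3 (exactness of the lower row of \eqref{eq5ind} and bijectivity of $\beta_l$). Directed colimits preserve short exact sequences, so the lower row of \eqref{eq5ind} is SES. A snake on \eqref{eq5ind} gives $\beta_l$ bijective, using that the left vertical is the isomorphism explained in the paper (Tor vanishing) and that $\alpha_l = \colim_s \alpha_s$ is bijective: indeed $\Ker\alpha_l = \colim_s H^2_\et(X,\Z(2))/l^s$ vanishes because this system stabilizes to a finite $l$-primary group (Proposition \ref{p1}), and the colimit transition multiplication-by-$l$ eventually annihilates it. For the $p$-primary statements the same architecture works, with the logarithmic Hodge-Witt Bockstein of \cite{gros-suwa} in place of Kummer theory and the $p$-primary half of Proposition \ref{p1} in place of the prime-to-$p$ one.
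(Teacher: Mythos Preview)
Your Step~1 contains a genuine error: you claim that since $\IM\theta$ is divisible, hence injective in $\Ab$, the sequence $0\to N\to \Pic(X)\otimes k^*\to \IM\theta\to 0$ splits and therefore $N$ is divisible. But splitting of a short exact sequence follows from injectivity of the \emph{subobject} or projectivity of the \emph{quotient}, not from injectivity of the quotient; the sequence $0\to\Z\to\Q\to\Q/\Z\to 0$ is the standard counterexample. So unique divisibility of $N$ is not established at this point.

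This gap is load-bearing. Your Step~2 chase needs the lower row of \eqref{eq5} to be exact at $T_l(H^3(X,\Z(2)))$: when you write ``there is $\tilde z\in\NS(X)\otimes\Z_l(1)$ with $\hat\alpha(\tilde x)=(\text{left})(\tilde z)$'', you are using $\Ker g=\IM f$ in the lower row, which is equivalent to $N^{\widehat{}}_l=0$, i.e.\ to the $l$-divisibility of $N$ you thought you had. The paper does not get this for free. It runs the snake lemma with the lower row only a complex (homology $N^{\widehat{}}_l$) to obtain $\Ker\hat\alpha\to\Ker\hat\beta\to N^{\widehat{}}_l\to 0$, and then invokes Lemma~\ref{lmanquant} (a Galois-equivariant splitting of $\pi\otimes\Q$ coming from hard Lefschetz and the Hodge index theorem for divisors) so that a weight argument forces $(\Ker\hat\beta)^U\otimes\Q=0$ for every open $U$, whence $N^{\widehat{}}_l\otimes\Q=0$ and so $N^{\widehat{}}_l=0$ by Lemma~\ref{l0}. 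Only after that does one get $l$-divisibility of $N$ and exactness of the lower rows. The author explicitly notes in the acknowledgements that Lemma~\ref{lmanquant} was added to fill a gap uncovered by the referee in exactly this proof, so your shortcut is very likely the same trap. Your concluding step in Step~2, deducing $\Ker\hat\beta=0$ from its finiteness via the torsion-freeness of $H^2_\ind(X,\Z_l(2))$ in Lemma~\ref{l1}, is correct and is also what the paper does once finiteness is in hand; and your Step~3 is fine.
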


\begin{proof} 
Since $\Pic(X)\otimes k^*$ is $l$-divisible, Lemma \ref{l0} yields exact sequences
\begin{gather}
0\to {}_{l^s}(\Pic(X)\otimes k^*)\to {}_{l^s}A\to N/l^s\to 0\label{3}\\
0\to {}_{l^s}A\to {}_{l^s}
  H^3(X,\Z(2))\to {}_{l^s}
  H_\ind^3(X,\Z(2))\to 0\label{4}
\end{gather}
where $A=\IM\theta$, and \eqref{4} implies the surjectivity of $\beta_s$, hence of 
$\hat{\beta}$ since the groups $H^2_\ind(X,\mu_{l^s}^{\otimes 2})$
 are finite. Since $\alpha_s$ is surjective in \eqref{eq4}, we also get that all groups in \eqref{3} and \eqref{4} are finite. Now the upper row of \eqref{eq5} is exact; in its lower row, the homology at
$T_l(H^3(X,\Z(2))$ is  isomorphic to $N^{\widehat{}}_l$ by taking the inverse limit of  
\eqref{3} and \eqref{4}. A snake chase then yields an exact sequence 
\[H^2(X,\Z(2))^{\widehat{}}_l\simeq \Ker \hat{\alpha}\to \Ker \hat{\beta}\to N^{\widehat{}}_l\to 0\] 
where $\Ker \hat{\alpha}$ is finite by Proposition \ref{p1}.

If,  as in the proof of Proposition \ref{p1},  $k$ is the algebraic closure of a finitely generated field $k_0$ over which $X$ is defined and $U$ is an open subgroup of $Gal(k/k_0)$, we have an isomorphism
\[(\Ker \hat{\beta})^U\otimes \Q\iso (N^{\widehat{}}_l)^U\otimes \Q.\]

On the one hand, $(\Ker \hat{\beta})^U\otimes \Q=0$ by Lemma \ref{lmanquant} because $\Ker \hat{\beta}$ is a subgroup of $H^2_\ind(X,\Z_l(2))$; on the other hand, since $N/l$ is finite,
\[N^{\widehat{}}_l= \bigcup_U (N^{\widehat{}}_l)^U.\]

Indeed, a finite set of generators $\{n_i\}$ of $N$ modulo $lN$ also generates $N$ modulo  $l^sN$ for all $s\ge 1$, and an open subgroup $U$ of $G$ fixing all the $n_i$ also fixes $N^{\widehat{}}_l$ (so the union is in fact stationary).

This gives $N^{\widehat{}}_l\otimes \Q=0$, hence $N^{\widehat{}}_l=0$ by Lemma \ref{l0}; thus $\Ker \hat{\beta}$ is finite, hence $0$ by Lemma \ref{l1}. This also shows the $l$-divisibility of $N$, which thanks to \eqref{3} and \eqref{4} implies  the exactness of the lower row of  \eqref{eq4}, hence of \eqref{eq5ind}. Now  $\alpha_l$ is surjective, and also injective  since $\Ker\alpha_l\simeq H^2(X,\Z(2))\otimes\Q_l/\Z_l$ is $0$ by Proposition \ref{p1}. Hence $\beta_l$ is bijective.  
\end{proof}

The case of $p$-torsion is similar and easier: by Proposition \ref{p1}, we have an isomorphism
\[H^2(X,\Q_p/\Z_p(2))\iso H^3(X,\Z(2))\{p\}\]
and $H^3(X,\Z(2))\{p\}\iso H_\ind^3(X,\Z(2))\{p\}$ since $k^*$ is uniquely
$p$-divi\-si\-ble, hence also $\Pic(X)\otimes k^*$. This concludes the proof of Theorem \ref{cr2}.

\section{Refined $\Hom$ groups} \label{loc}

Let $\sA$ be an additive category; write $\sA\otimes \Q$ for the category
with the same objects as $\sA$ and $\Hom$ groups tensored with $\Q$, and $\sA\boxtimes\Q$ for the pseudo-abelian envelope of $\sA\otimes \Q$. If $\sA$ is abelian, then $\sA\otimes \Q=\sA\boxtimes \Q$ is still abelian and is the localisation of $\sA$ by the Serre subcategory $\sA_\tors$ of objects $A$ such that $n1_A=0$ for some integer $n>0$ (e.g. \cite[Prop. B.3.1]{BVK}).

For $\sA=\Ab$, the category of abelian groups, one has a natural functor ``tensoring objects with $\Q$''
\[\Ab\otimes \Q\to \Vec_\Q\]
to $\Q$-vector spaces. This functor is fully faithful when restricted to the full subcategory of $\Ab\otimes \Q$ given by finitely generated abelian groups, but for example it does not send $\Q/\Z$ to $0$. For clarity, we shall write
\begin{equation}\label{eqQ}
A_\Q, \quad A\otimes \Q
\end{equation}
for the image of an abelian group $A\in \Ab$ respectively in $\Ab\otimes \Q$ and $\Vec_\Q$.

 Let $F$ be an additive functor (covariant or contravariant) from $\sA$ to $\Ab$, the category of abelian groups: it then induces a functor
\[F_\Q:\sA\boxtimes \Q\to \Ab\otimes \Q.\]

In particular, we get a bifunctor
\[\Hom_\Q:(\sA\boxtimes \Q)^\op\times \sA\boxtimes\Q\to \Ab\otimes \Q\]
which refines the bifunctor $\Hom$ of $\sA\boxtimes\Q$.

We shall apply this to $\sA=\sM_\rat^\eff(k)$, the category of effective Chow motives with integral coefficients: the category  $\sA\boxtimes \Q$ is then equivalent to the category  $\sM_\rat^\eff(k,\Q)$ of Chow motives with rational coefficients.

\section{Chow-K\"unneth decomposition of $\sK_2$-cohomology}

In this section,  $X$ is a connected  surface. Its Chow motive
$h(X)\in \sM_\rat^\eff(k,\Q)$ then enjoys a refined Chow-K\"unneth decomposition
\begin{equation}\label{eq1}
h(X)=h_0(X)\oplus h_1(X)\oplus h_2^\alg(X)\oplus t_2(X)\oplus h_3(X)\oplus
h_4(X)
\end{equation}
\cite[Prop. 7.2.1 and 7.2.3]{kmp}. The projectors defining this decomposition act on 
the groups $H^i(X,\Z(2))_\Q$; we propose to compute the corresponding direct summands 
$H^i(M,\Z(2))_\Q$. To be more concrete, we shall express this in terms of the $\sK_2$-cohomology of $X$.

We keep the notation 
\[H_\ind^1(X,\sK_2) = \Coker(\Pic(X)\otimes k^*\to
H^1(X,\sK_2))\]
to which we adjoin
\[H_\ind^0(X,\sK_2) = \Coker(K_2(k)\to
H^0(X,\sK_2)).\]

To relate with the notation in Section \ref{s1}, recall that $H^2(k,\Z(2))\allowbreak=K_2(k)$ and $H^2(X,\Z(2))=H^0(X,\sK_2)$.

We shall also need a  smooth connected hyperplane section $C$ of $X$, appearing in the construction of \eqref{eq1} \cite{murre,scholl}, and its own 
Chow-K\"unneth decomposition attached to the choice of a rational  point:
\begin{equation}\label{eq2}
h(C)=h_0(C)\oplus h_1(C)\oplus h_2(C).
\end{equation}

The projectors defining \eqref{eq2} have integral coefficients, while those 
defining \eqref{eq1} only have  rational coefficients in general.

The following proposition extends the computations of \cite[7.2.1 and 7.2.3]{kmp} to weight $2$ motivic cohomology.

\begin{prop}\label{pr3} a) We have the following table for $H^i(M,\Z(2))$: 
\begin{center}
\begin{tabular}{| c | c | c | c | c  |}
\hline
$M=$& $h_0(C)$ & $h_1(C)$ &$h_2(C)$ \\
\hline
$i=2$&$K_2(k)$ &$H_\ind^0(C,\sK_2)$ &$0$ \\
$i=3$&$0$ &$V(C)$ &$k^*$\\
$i>3$&$0$ &$0$ &$0$\\
\hline
\end{tabular}
\end{center}
where $V(C)=\Ker(H^1(C,\sK_2)\by{N}k^*)$ is Bloch's group.\\
b) We have the following table for $H^i(M,\Z(2))$, where all groups are taken in $\Ab\otimes \Q$ (see Section \ref{loc}): 
\begin{center}
\begin{tabular}{| c | c | c | c | c  | c | c |c |}
\hline
$M=$& $h_0(X)$ & $h_1(X)$ &$h_2^\alg(X)$ &$t_2(X)$ &$h_3(X)$ &$h_4(X)$ \\
\hline
$i=2$&$K_2(k)$ &$A$ &$0$ &$B$ &$0$&$0$\\
$i=3$&$0$ &$\Pic^0(X)k^*$ &$\NS(X)\otimes k^*$ &$H_\ind^1(X,\sK_2)$&$0$&$0$\\
$i=4$&$0$ &$0$ &$0$&$T(X)$ &$\Alb(X)$&$\Z$\\
$i>4$&$0$ &$0$ &$0$ &$0$&$0$&$0$\\
\hline
\end{tabular}
\end{center}
where 
\begin{align*}
\Pic^0(X)k^*&=\IM(\Pic^0(X)\otimes k^*\to H^1(X,\sK_2))\\
A &=\IM(H_\ind^0(X,\sK_2)\to H_\ind^0(C,\sK_2))\\
B &=\Ker(H_\ind^0(X,\sK_2)\to H_\ind^0(C,\sK_2)).
\end{align*}
\end{prop}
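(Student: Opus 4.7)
My plan is to compute $H^i((X,p), \Z(2))$ for each Chow-Künneth summand $M = (X,p)$ as the image of the projector $p^*$ acting on $H^i(X, \Z(2))$. For $X$ of dimension $\leq 2$ one has $H^i(X, \Z(2)) = H^{i-2}(X, \sK_2)$ by the Merkurjev-Suslin theorem (for $i \leq 3$) and Bloch's formula (for $i = 4$), so the calculation reduces to $\sK_2$-cohomology. I will use throughout that $H^i(\L^r, \Z(2)) = H^{i-2r}(\Spec k, \Z(2-r))$, which gives $K_2(k)$ in degree $2$ for $r=0$, $k^*$ in degree $3$ for $r=1$, $\Z$ in degree $4$ for $r=2$, and vanishes in all other degrees that concern us.

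For part (a), the projectors $p_0 = [e \times C]$ and $p_2 = [C \times e]$ realise $h_0(C) \cong \mathbf{1}$ and $h_2(C) \cong \L$, filling their columns by the basic identifications. The remaining column comes from $h_1(C) = (C, \Delta_C - p_0 - p_2)$: since $p_0, p_1, p_2$ are orthogonal, the induced decomposition of $H^i(C, \Z(2))$ is a direct sum. In degree $2$, $p_0^*$ acts on $H^0(C, \sK_2)$ as the constant section from $K_2(k)$ and $p_2^*$ contributes nothing (its target $H^0(\Spec k, \Z(1))$ vanishes), so the $h_1$-summand equals the quotient $H_\ind^0(C, \sK_2)$ by definition. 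In degree $3$ the situation is reversed: $p_0^*$ contributes nothing and $p_2^*$ acts on $H^1(C, \sK_2)$ as the composition of Bloch's norm $N: H^1(C, \sK_2) \to k^*$ with the Gysin inclusion $k^* \hookrightarrow H^1(C, \sK_2)$, so the $h_1$-summand is $\Ker N = V(C)$. Vanishing for $i > 3$ is immediate from $\dim C = 1$.

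For part (b) I run the analogous computation in $\Ab \otimes \Q$ using the six projectors of \cite[Prop.~7.2.1]{kmp}. The summands $h_0(X) \cong \mathbf{1}$, $h_4(X) \cong \L^2$ and $h_2^\alg(X) \cong \L^{\oplus \rho}$ (realised by a rational basis of $\NS(X)_\Q$, by loc.\ cit.) contribute $K_2(k)$, $\Z$ and $(\NS(X) \otimes k^*)_\Q$ directly from the basic identifications, the last via the symbol map $\NS(X) \otimes k^* \to H^1(X, \sK_2)$. The pieces $h_1(X)$ and $h_3(X)$ are built from $C$ and the surjection $J(C) \twoheadrightarrow \Alb(X)$ à la Murre-Scholl: on cohomology $\pi_1^X$ factors essentially as $\iota_* \circ \pi_1^C \circ \iota^*$ modulo corrections by $\pi_0^X + \pi_2^{\alg,X}$, and $\pi_3^X$ is built analogously via Poincaré duality. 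Feeding the part (a) computation of $h_1(C)$ through this composite yields: in degree $2$, the image $A = \IM(H_\ind^0(X, \sK_2) \to H_\ind^0(C, \sK_2))$ for $h_1(X)$; in degree $3$, $\Pic^0(X) k^*$ (the image of $\Pic^0(X) \otimes k^* \to H^1(X, \sK_2)$); and in degree $4$, $\Alb(X)_\Q$ for $h_3(X)$ via the Albanese map. The final piece $t_2(X)$ is the complementary summand; subtracting all the above in each degree yields $B$ in degree $2$, $H_\ind^1(X, \sK_2)_\Q$ in degree $3$ (by the definition of the indecomposable quotient once $\Pic(X) \otimes k^*$ has been accounted for), and $T(X)_\Q$ in degree $4$ (from \cite[Prop.~7.2.3]{kmp}).

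The main obstacle is a careful identification of $\pi_1^X$'s image as $A$ in degree $2$ and $\Pic^0(X) k^*$ in degree $3$, together with the orthogonal separation of $\Pic^0(X) k^*$ from $\NS(X) \otimes k^*$ inside $\Pic(X) \otimes k^*$. The first requires unpacking the Murre-Scholl construction so that the composite restriction-corestriction through $C$ on $\sK_2$-cohomology matches the image of $H_\ind^0(X, \sK_2) \to H_\ind^0(C, \sK_2)$ used to define $A$; the second follows from the rational splitting $\Pic(X)_\Q = \Pic^0(X)_\Q \oplus \NS(X)_\Q$, provided one verifies that $\pi_1^X$ and $\pi_2^{\alg,X}$ realise this splitting on $H^1(X, \sK_2)$ — a statement of Hodge-index type flavour, reminiscent of Lemma \ref{lmanquant}.
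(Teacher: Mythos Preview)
Your overall strategy — compute the ``easy'' summands directly and recover the remaining ones by exclusion — is exactly the paper's, and your part (a) matches the paper almost verbatim. The genuine difference is in part (b), rows $i=2,3$: you choose to compute the image of $\pi_1^X$ directly (via the Murre--Scholl factorisation through $C$) and then obtain $t_2(X)$ as the leftover, whereas the paper does the reverse. The paper first pins down $t_2(X)$ using two light observations: in degree $3$, that $\pi_2^{\tr}$ acts as $0$ on $\Pic(X)_\Q$, so $H^3(t_2(X),\Z(2))_\Q \simeq H^1_{\ind}(X,\sK_2)_\Q$ immediately; in degree $2$, that the composite $h(C)\xrightarrow{i_*} h(X)\to t_2(X)$ is $0$, forcing $H^2(t_2(X),\Z(2))_\Q\subseteq B$, while $H^2(h_1(X),\Z(2))_\Q$ lands in $A$ because $h_1(X)$ is a summand of $h_1(C)$ — and then exclusion turns both inclusions into equalities. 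This route completely sidesteps the ``main obstacle'' you flag: there is no need to unpack $\pi_1^X$ as $\iota_*\circ\pi_1^C\circ\iota^*$ on $\sK_2$-cohomology, nor to verify the $\Pic^0/\NS$ splitting under $\pi_1^X$ and $\pi_2^{\alg,X}$. Your route is not wrong, but the work you correctly identify as remaining is precisely what the paper's ordering makes unnecessary.
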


\begin{proof} We proceed by exclusion as in the proof of \cite[Th. 7.8.4]{kmp}.
Let us start with a). We use the notation \eqref{eqQ} of Section \ref{loc}.

\begin{itemize}
\item  For $i>3$, $H^{i}(M,\Z(2))_\Q$ is a direct summand of $H^{i}(C,\Z(2))_\Q=0$.
\item One has $h_2(C)=\L$, hence 
\[H^i(h_2(C),\Z(2))_\Q=H^{i-2}(k,\Z(1))_\Q=
\begin{cases}
k^*_\Q&\text{if $i=3$}\\
0&\text{else.}
\end{cases}\]
\item One has 
\[H^i(h_0(C),\Z(2))_\Q=H^i(k,\Z(2))_\Q=
\begin{cases}
K_2(k)_\Q &\text{if $i=2$}\\
0&\text{if $i>2$.}
\end{cases}
\]
\item The case of $M=h_1(C)$ follows from the two previous ones by exclusion.
\end{itemize}

Let us come to b). 

\begin{itemize}
\item For $i>4$, $H^{i}(M,\Z(2))_\Q$ is a direct summand of $H^{i}(X,\Z(2))_\Q=0$.
\item One has $h_4(X)=\L^2$, hence
\[H^i(h_4(X),\Z(2))_\Q=H^{i-4}(k,\Z)_\Q=
\begin{cases}
\Z_\Q&\text{if $i=4$}\\
0&\text{else.}
\end{cases}\]
\item One has $h_3(X)=h_1(X)(1)$, hence
\[H^i(h_3(X),\Z(2))_\Q=H^{i-2}(h_1(X),\Z(1))_\Q.\]
As $h_1(X)$ is a direct summand of $h_1(C)$, $H^{i-2}(h_1(X),\Z(1))_\Q$ is a direct summand of $H^{i-2}(C,\Z(1))_\Q$. This group is $0$ for $i\ne 3,4$. For $i=3$, one has $H^1(C,\Z(1))_\Q\allowbreak=H^1(h_0(C),\Z(1))_\Q$, hence
\[H^{1}(h_1(C),\Z(1))_\Q=H^{1}(h_1(X),\Z(1))_\Q=0.\] 
For $i=4$,  $H^2(h_1(X),\Z(1))_\Q = \Alb(X)_\Q$ (cf. Murre \cite{murre}).
\item One has $h_2^\alg(X)=\NS(X)(1)$, hence
\begin{multline*}
H^i(h_2^\alg(X),\Z(2))_\Q= (H^{i-2}(k,\Z(1))\otimes \NS(X))_\Q \\
=
\begin{cases}
(\NS(X)\otimes k^*)_\Q &\text{if $i=3$}\\
0&\text{else.}
\end{cases}
\end{multline*}
\item One has 
\[H^i(h_0(X),\Z(2))_\Q=H^i(k,\Z(2))_\Q=
\begin{cases}
K_2(k)_\Q &\text{if $i=2$}\\
0&\text{if $i>2$.}
\end{cases}
\]
\item As $h^1(X)$ is a direct summand of $h^1(C)$, $H^i(h^1(X),\Z(2))_\Q$ is a direct summand of 
$H^i(C,\Z(2))_\Q$: this group is therefore $0$ $i>3$. This completes row $i=4$ by 
exclusion.
\item The action of refined Chow-K\"unneth  projectors respects the homomorphism
$(\Pic(X)\otimes k^*)_\Q\to H^3(X,\Z(2))_\Q$. As the action of $\pi_2^\tr$ (defining
$t_2(X)$) is $0$ on $\Pic(X)_\Q$, we get $H^3(t_2(X),\Z(2))_\Q\allowbreak\simeq H_\ind^1(X,\sK_2)_\Q$,
which completes row $i=3$ by exclusion.
\item The construction of $\pi_2^\tr$ \cite[proof of 
2.3]{kmp} shows that the composition
\[h(C)\overset{i_*}{\to} h(X)\to t_2(X)\]
is $0$. Hence the composition
\[H^i(t_2(X),\Z(2))_\Q\to H^i(X,\Z(2))_\Q\overset{i^*}{\to} H^i(C,\Z(2))_\Q\]
is $0$ for all $i$. Applying this for $i=2$, we see that $H^2(t_2(X),\Z(2))_\Q\allowbreak\subseteq
B_\Q$. On the other hand, $H^2(h_1(X),\Z(2))_\Q$ is a direct summand of $H^2(h_1(C),\Z(2))_\Q$, hence injects in  $A_\Q$. By exclusion, we have $H^2(t_2(X),\Z(2))_\Q\oplus H^2(h_1(X),\Z(2))_\Q\simeq H_\ind^0(X,\Z(2))_\Q$,
hence row $i=2$.
\end{itemize}
\end{proof}

\begin{rque} Let us clarify the ``reasoning by exclusion'' that has been used repeatedly in this proof. Let $F$ be a functor from smooth projective varieties to $\Ab\otimes\Q$, provided with an action of Chow correspondences. Then $F$ automatically extends to $\sM_\rat^\eff(k,\Q)$, and we wish to compute the effect of a Chow-K\"unneth decomposition of $h(X)$ on $F(X)$. The reasoning above is as follows in its simplest form:

Suppose that we have a motivic decomposition $h(X)=M\oplus M'$, hence a decomposition $F(X)=F(M)\oplus F(M')$. Suppose that we know an exact sequence
\[0\to A\to F(X)\to B\to 0\]
and an isomorphism $F(M)\simeq A$. Then $F(M')\simeq B$.

Of course this reasoning is incorrect as it stands; to justify it, one should check that if $\pi$ is the projector with image $M$ yielding the decomposition of $h(X)$, then $F(\pi)$  does have image $A$. This can be checked in all cases of the above proof, but such a verification would be tedious, double the length of the proof and probably make it unreadable. I hope the reader will not disagree with this expository choice.
\end{rque}

\section{Generalisation}\label{sgen}

In this section, we take the gist of the previous arguments. For convenience we pass from effective Chow motives $\sM_\rat^\eff(k,\Q)$ to all Chow motives $\sM_\rat(k,\Q)$. Since \'etale motivic cohomology has an action of Chow correspondences and verifies the projective bundle formula, it yields well-defined contravariant functors
\[H^i_\et:\sM_\rat(k,\Q)\to \Ab\otimes \Q\]
such that $H^i_\et(X,\Z(n))_\Q=H^{i-2n}_\et(h(X)(-n))$ for any smooth projective $k$-variety $X$ and $i,n\in\Z$. We also have (contravariant) realisation functors
\[H^i_l:\sM_\rat(k,\Q)\to  \sC_l\otimes \Q\]
extending $l$-adic cohomology for $l\ne \car k$, where $\sC_l$ denotes the category of $l\Z$-adic inverse systems of abelian groups \cite[V.3.1.1]{SGA5}. For $l=\car k$ we use logarithmic Hodge-Witt cohomology as in Theorem \ref{cr2} \cite[\S 2]{milne}, \cite{gros-suwa}.

\begin{defn} Let $M\in \sM_\rat(k,\Q)$.  If $i\in \Z$, we say that \emph{$M$ is pure of weight $i$} if $H^j_l(M)=0$ for all $j\ne i$ and all primes $l$. 
\end{defn}

For example, if $h(X)=\bigoplus_{i=0}^{2d} h_i(X)$ is a Chow-K\"unneth decomposition of the motive $h(X)$ of a $d$-dimensional smooth projective variety $X$, then $h_i(X)$ is pure of weight $i$. If $d=2$, the motive $t_2(X)(-2)$ is pure of weight $-2$ as a direct summand of $h_2(X)(-2)$.

\begin{thm} \label{t3}Let $M$ be pure of weight $i$. Then $H^j_\et(M)$ is uniquely divisible for $j\ne i,i+1$. If moreover $i\ne 0$, then $H^i_\et(M)$  is uniquely divisible and $H^{i+1}_\et(M)\{l\}\simeq H^i_l(M)\otimes \Q/\Z$.
\end{thm}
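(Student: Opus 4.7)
The approach is to transport the machinery of Proposition \ref{p1} from smooth projective varieties to arbitrary Chow motives. Writing $M$ as a direct summand of some $h(X)(n)$ cut out by a rational projector $p$, I apply $p$ to the Bockstein exact sequence \eqref{eq3} to obtain, in $\Ab\otimes\Q$ and for each prime $l$, a short exact sequence
\[
0 \to H^j_\et(M)^{\widehat{}}_l \to H^j_l(M) \to \hat{T}_l(H^{j+1}_\et(M)) \to 0,
\]
with the $l=p$ version built from logarithmic Hodge--Witt coefficients exactly as in Proposition \ref{p1}. The purity of $M$ forces $H^j_l(M)=0$ in $\sC_l\otimes\Q$ for $j\ne i$, and this drives the rest: at each such $j$ the sequence yields both $l$-divisibility of $H^j_\et(M)$ and vanishing of $\hat{T}_l(H^{j+1}_\et(M))$, the latter being the absence of an $l$-primary divisible subgroup of $H^{j+1}_\et(M)$.

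For $j\ne i,i+1$ I combine the Bockstein information at indices $j$ and $j-1$: the former gives $l$-divisibility of $H^j_\et(M)$, the latter gives vanishing of its maximal $l$-primary divisible subgroup. Since the torsion of a divisible abelian group is itself divisible, these together force $H^j_\et(M)$ to be uniquely $l$-divisible for every $l$, hence a $\Q$-vector space in $\Ab\otimes\Q$.

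For $j=i$ with $i\ne 0$, only the no-divisible-torsion direction is automatic (from Bockstein at $i-1$). Divisibility of $H^i_\et(M)$ calls for the weight argument of Proposition \ref{p1}: descend to a finitely generated model $k_0$ of $k$ (using that motivic cohomology commutes with filtered inverse limits of smooth schemes with affine transition maps, and that $l$-adic realizations are insensitive to algebraically closed extensions), invoke Deligne's purity to see that geometric Frobenius acts on $H^i_l(M)$ with weight $i$, and exploit $i\ne 0$ to conclude that $H^i_l(M)^U$ is finite for every open $U\subset Gal(k/k_0)$ (with the $l=p$ case covered by Illusie--Raynaud and \cite{gros-suwa} as in the proof of Proposition \ref{p1}). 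Gabber's theorem \cite{gabber} then bounds the image of $H^i_\et(M)\to H^i_l(M)$ uniformly in $l$ and makes it zero for almost all $l$, and Lemma \ref{ctr} delivers divisibility in $\Ab\otimes\Q$.

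For the final assertion, pass to the limit in $s$ in the finite-coefficient Bockstein sequence
\[
0 \to H^i_\et(M)/l^s \to H^i_\et(M,\Z/l^s) \to {}_{l^s}H^{i+1}_\et(M) \to 0
\]
to get an exact sequence $0 \to H^i_\et(M)\otimes\Q_l/\Z_l \to H^i_l(M)\otimes\Q_l/\Z_l \to H^{i+1}_\et(M)\{l\} \to 0$; unique divisibility of $H^i_\et(M)$ kills the left-hand term in $\Ab\otimes\Q$, yielding the claimed isomorphism. The hardest point I expect is the clean transfer of the integrally-defined Bockstein sequence to the motive $M$ through the rational projector $p$, together with the delicate characteristic-$p$ bookkeeping already present in Proposition \ref{p1}.
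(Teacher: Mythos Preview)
Your proposal is correct and follows essentially the same approach as the paper: transfer the Bockstein sequences to $M$, use purity ($H^j_l(M)=0$ for $j\ne i$) to settle $j\ne i,i+1$, and invoke the weight argument of Proposition \ref{p1} for $j=i$ when $i\ne 0$. The paper's two-sentence proof works directly with the inverse-system Bockstein in $\sC_l\otimes\Q$ rather than passing to completions and Tate modules, which cleanly absorbs the rational projector and sidesteps exactly the transfer issue you flag at the end.
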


(An object $A\in \Ab\otimes \Q$ is \emph{uniquely divisible} if multiplication by $n$ is an automorphism of $A$ for any integer $n\ne 0$.)

\begin{proof} As in Section \ref{s1}, we have Bockstein exact sequences in $\sC_l\otimes \Q$
\[0\to H^j_\et(M)/l^*\by{a} H^j_l(M)\to {}_{l^*}H^{j+1}_\et(M)\to 0\]
which yields the first statement. For the second one, the weight argument of \cite{ctr} (developed in the proof of Proposition \ref{p1} above) yields $\IM a=0$. 
\end{proof}

Let $X$ be a surface. Applying Theorem \ref{t3} to $M=t_2(X)(-2)$ as above, we get that $H^i_\et(t_2(X),\Z(2))$ is uniquely divisible for $i\ne 3$ and 
\[H^3_\et(t_2(X),\Z(2))\{l\}\allowbreak\simeq H^3_\tr(X,\Z_l(2)\otimes \Q/\Z \simeq \Br(X)\{l\}\]
in $\Ab\otimes \Q$, recovering a slightly weaker version of Theorem \ref{cr2} in view of Proposition \ref{pr3}. For $i=4$, the exact sequence \cite[(2-7)]{cycle-etale}
\begin{equation*}\label{eqlk}
0\to CH^2(X)\to H^4_\et(X,\Z(2))\to H^0(X,\sH^3_\et(\Q/\Z(2)))\to 0
\end{equation*}
shows that $CH^2(X)\iso H^4_\et(X,\Z(2))$ since $\dim X=2$, whence 
\[T(X)=H^4(t_2(X),\Z(2))\iso H^4_\et(t_2(X),\Z(2))\] 
yielding a proof of Ro\v\i tman's theorem up to small torsion.

\begin{rque} This argument is not integral because the projector $\pi_2^\tr$ defining $t_2(X)$ is not an integral correspondence.  It is however $l$-integral for any $l$ prime to a denominator $D$ of $\pi_2^\tr$. This $D$ is essentially controlled by the degree of the Weil isogeny
\[\Pic^0_{X/k}\to \Pic^0_{C/k}=\Alb(C)\to \Alb(X)\]
where $C$ is the ample curve involved in the construction of $\pi_2^\tr$. If one could show that various $C$'s can be chosen so that the  corresponding degrees have gcd equal to $1$, one would deduce a full proof of Ro\v\i tman's theorem from the above.
\end{rque}

\end{document}